\newtheorem{thm}{Theorem}[section]  
\newtheorem*{un-no-thm}{Theorem}
\newtheorem{cor}[thm]{Corollary}     % Numbered along with thm
\theoremstyle{definition}
\newtheorem{defn}[thm]{Definition}   % Numbered along with thm
\theoremstyle{definition}
\theoremstyle{definition}
\theoremstyle{remark}
\newtheorem{rem}[thm]{Remark}
\newtheorem{rems}[thm]{Remarks}
\newtheorem{exs}[thm]{Examples}
\newcommand{\med}{\medskip}
\newcommand{\bz}{\mathbb{Z}}
\newcommand{\br}{\mathbb{R}}
\newcommand{\ce}{\mathcal{E}}
\newcommand{\cm}{\mathcal{M}}
\newcommand{\sr}{\mathcal{R}}
\newcommand{\ct}{\mathcal{T}}
\newcommand{\hk}{\hookrightarrow}
\newcommand{\bg}{\bigskip}
\newcommand{\bfl}{\begin{flushleft}}
\newcommand{\efl}{\end{flushleft}}
\newcommand{\hocolim}{\operatorname{hocolim}}
\newcommand{\xr}{\xrightarrow}
\begin{document}
\title{Umkehr Maps}
\date{\today}
\thanks{Both authors are partially supported by the National Science Foundation.}
\author{Ralph L.\ Cohen}
\address{Stanford University, Stanford CA 94305}
\email{ralph@math.stanford.edu}
\author{John R.\ Klein}
\address{Wayne State University, Detroit, MI 48202}
\email{klein@math.wayne.edu}

\begin{abstract}  In this note we study umkehr maps in generalized (co)homology theories arising from the Pontrjagin-Thom construction, from  integrating along fibers, pushforward homomorphisms, and other similar constructions.  We consider the basic properties of these constructions and develop axioms which any umkehr homomorphism must satisfy.  We use a version of Brown representability to   show
that these axioms completely characterize these homomorphisms, and a resulting uniqueness theorem follows.  Finally, motivated by constructions in string topology, we extend
this axiomatic treatment of umkehr homomorphisms to a fiberwise setting. 
\end{abstract}
\maketitle%\pagestyle{fancy}
\setlength{\parindent}{15pt}
\setlength{\parskip}{1pt plus 0pt minus 1pt}

\def\Top{\bold T\bold o \bold p}
\def\Sp{\text{\bf Sp}}
\def\vo{\varOmega}
\def\vs{\varSigma}
\def\smsh{\wedge}
\def\flush{\flushpar}
\def\id{\text{id}}
\def\dbslash{/\!\! /}
\def\codim{\text{\rm codim\,}}
\def\:{\colon}
\def\holim{\text{holim\,}}
\def\hocolim{\text{hocolim\,}}
\def\Bbb{\mathbb}
\def\bold{\mathbf}
\def\Aut{\text{\rm Aut}}
\def\cal{\mathcal}
\def\sec{\text{\rm sec}}

\section{Introduction}
The classical umkehr homomorphism of Hopf \cite{Hopf},
assigns to a map $f\: M \to N$
of closed manifolds of the same dimension a ``wrong way'' homomorphism
$f_!\: H_*(N)\to H_*(M)$ on singular homology. Hopf showed that 
this map is compatible with intersection pairings.
Freudenthal \cite{Freud} showed that $f_!$ corresponds
to the homomorphism $f^*\:H^*(N) \to H^*(M)$ induced by $f$ on
cohomology by means of the Poincar\'e duality isomorphisms
for $M$ and $N$. This identification allows one to give
a definition of the umkehr homomorphism for a map between
closed manifolds of any dimension.

Variants of the umkehr homomorphism, such as those defined 
by the Pontrjagin-Thom construction, intersections of  chains, 
integration along fibers, and the Becker-Gottlieb transfer, have 
played central roles in the development of differential and 
algebraic topology.  Similarly, the ``push-forward" constructions 
in cohomology, 
 Chow groups, and $K$-theory, have been important techniques in 
algebraic geometry and index theory.  Topological generalizations 
of umkehr mappings have played important roles in recent developments in topology, such as Madsen and Weiss's proof of the Mumford conjecture 
and its generalizations \cite{madsen-weiss}, 
\cite{GMTW}, \cite{galatius},  and the development of  
%\marginpar{Which Galatius paper?}
string topology \cite{chassullivan}, \cite{cohenjones}.

 Considering these various different, but related constructions,  it is natural to ask how they are related?  Similarly, one might ask: 
what properties characterize or classify umkehr homomorphisms?

The goal of this note is to describe naturally occurring axioms which completely classify umkehr homomorphisms. These axioms come as a result of considering the basic   properties of the  umkehr homomorphisms mentioned above.  We will show that a Brown-type representability theorem 
classifies these umkehr maps.     
In more recent applications, such as those in string topology, umkehr homomorphisms were needed  in the setting of pullback squares of Serre fibrations,
$$
\begin{CD}
E_1 @>\tilde f >>  E_2 \\
@VVV   @VVV \\
P @>>f > N 
\end{CD}
$$
where $f : P \to N$ is a smooth map of manifolds. That is, one wanted an umkehr homomorphism, $\tilde f_! : H_*(E_2) \to H_*(E_1)$ 
(with a dimension shift of $\dim N - \dim P$).   
This leads us to consider axioms for the existence 
and uniqueness of umkehr homomorphism in this fiberwise 
setting, using a fiberwise version of Brown representability,
 which we prove in the appendix.

\section{Preliminaries}
We will work in the category ${\cal T}$ of compactly generated
weak Hausdorff spaces. 
Products are to be re-topologized using the compactly
generated topology. Mapping spaces are to be given
the compactly generated, compact open topology.
A {\it weak equivalence} of
spaces denotes a (chain of) weak homotopy equivalence(s).

We will assume that the reader is familiar with
the standard machinery of algebraic topology including 
homotopy limits and colimits (the standard reference
for the latter is \cite{Bousfield-Kan}).

A spectrum $E$ is a sequence of
based spaces $E_n$, $n \ge 0$ together with (structure) maps
$\Sigma E_n \to E_{n+1}$, where $\Sigma$ denotes reduced suspension. 
A spectrum has homotopy groups $\pi_n(E)$ for $n \in {\Bbb 
Z}$ defined by the colimit of the system $\{\pi_{n+j}(E_j)\}_{j\ge 0}$.
A morphism of spectra $E\to E'$ consists of maps 
$E_n \to E'_n$ which are compatible with the structure maps.
A morphism is a weak equivalence if it induces an isomorphism in
homotopy groups in each degree. The category of spectra is
denoted ${\cal S}$.

We say that a spectrum $E$ is an {\it $\Omega$-spectrum}
if the adjoint maps $E_n \to \Omega E_{n+1}$ are weak equivalences.
For any spectrum $E$, there a weak equivalence 
$E\simeq  E^{\text{f}}$ with $E^{\text{f}}$ an  $\Omega$-spectrum. This
weak equivalence is natural.

For an unbased space $K$ we let $\text{\rm map}(K,E)$ denote
the mapping spectrum whose $j$-th space is given 
by  the space of (unbased) maps $K \to E_j$. The basepoint of this mapping space is taken to be the constant map at the basepoint of $E_j$.   The structure
maps in this case are induced by suspending and taking
the adjunction. For this to have the ``correct'' homotopy
type, it should be assumed that $E$ is an $\Omega$-spectrum and
that $K$ has the homotopy type of a CW complex.

Although it will not emphasized in the paper, the above discussion fits
naturally within the context of a Quillen model category structure
on the category of spectra (see for example, \cite{Schwede}).

\section{What should an umkehr map do?}
Umkehr homomorphisms are known to occur in all 
cohomology theories.  Now, every cohomology theory
is representable, so one can view the umkehr 
homomorphism as arising from a certain map of spectra. 

Minimally, an umkehr map should assign to
an embedding $f\: P \subset  N$ of closed manifolds a wrong way stable map
$$
f^! \: N^+ \to P^\nu
$$
where $\nu$ is the normal bundle
of $f$. One definition of $f^!$ is given by taking the Pontryagin-Thom
construction of the embedding $f$.

For a variety of reasons, it is also desirable to twist the above
by an arbitrary vector bundle $\xi$ over $N$. It this case, the umkehr map
should give a map of Thom spectra
$$
f^!_\xi \: N^\xi \to P^{\nu+ \xi}\, .
$$
Classically, such an $f^!_\xi$ is  produced by taking the Pontryagin-Thom
construction of the composite
$$
\begin{CD}
P \subset N @> \text{zero section} >> D(\xi)
\end{CD}
$$
where $D(\xi)$ is the unit disk bundle of $\xi$.
This directly motivates one to consider umkehr maps as being
defined not only for closed manifolds, but more generally 
for maps of compact manifolds {\it having a boundary.}
The twisting by bundles then becomes a special case, as
$D(\xi)$ is a manifold with boundary.

For example,
if $f$ fails to be an embedding, we can always approximate
the composite 
$$
f_j\: P \subset N = N \times 0 \subset N \times D^j 
$$
by an embedding when $j$ is sufficiently large, and therefore,
assuming umkehr maps have been defined for manifolds with boundary,
we obtain an umkehr map for $f_j$ which we simply declare to 
be the umkehr map for $f$.

The above suggests the following.
Let ${\cal M}$ be the category whose objects are compact
manifolds $P$ (possibly with boundary) in which a morphism
$P \to Q$ is a continuous map (not necessarily preserving the boundary).
Let ${\cal S}$ be the category of spectra. We will consider 
{\it contravariant} functors
$$
u\: {\cal M}  \to {\cal S}
$$
satisfying certain axioms.

The first two axioms  will be

\begin{itemize}
\item {\it Vacuum Axiom.} If $\emptyset$ is
the empty manifold, then $u(\emptyset)$ is
contractible.

\item {\it Homotopy Invariance Axiom.} If $f\: P \to Q$
is a weak (homotopy) equivalence, then so is
 $u(f)$.
\end{itemize}
The vacuum axiom is motivated by the
fact that Pontryagin-Thom collapse of an
empty submanifold yields a constant map.

The homotopy invariance axiom is motivated by the following. Let
$P\subset N$ be a homotopy equivalence, where $P$ is closed.
 Then 
the Pontryagin-Thom collapse $N/\partial N \to P^\nu$ is a 
stable homotopy equivalence.

The last axiom  umkehr functors are required to satisfy is {\it locality}.
In its most geometric form, locality will mean that
a decomposition of manifolds yields a corresponding decomposition
of their Pontryagin-Thom collapses.
Suppose for example that
$P \subset S^n$ is a closed submanifold with normal
bundle $\nu$ such that $P$ is transverse
to the equator $S^{n-1} \subset S^n$. Let $D^n_\pm$
denote the upper and lower hemispheres. 
Setting $P_\pm = P \cap D^n_\pm$ and $Q = P \cap S^{n-1}$,
we obtain a decomposition $P = P_- \cup_Q P_+$.
The Pontryagin-Thom collapse of each inclusion $(P_\pm,Q) \subset 
(D^n_\pm,S^{n-1})$
gives maps
$$
k_\pm\:(D^n_\pm,S^{n-1}) \to (P_\pm^\nu,Q^\nu)
$$
which may be glued to a yield a map 
$$
\begin{CD}
S^n = D^n_- \cup_{S^{n-1}} D^n_+ @> k_- \cup k_+ >>
 P_-^\nu \cup_{Q^\nu} P_+^\nu = 
P^\nu 
\end{CD}
$$
which is just the Pontryagin-Thom construction of $P \subset S^n$.

In general, it seems that
the cleanest way to formulate the locality axiom is in terms
of a (left homotopy) Kan extension of $u$
to the category ${\cal T}$ of topological spaces.
The resulting functor will also be homotopy invariant. The 
Kan extension $u^\#\: {\cal T}\to {\cal S}$ is the contravariant functor given by 
$$
u^\#(Y) = \underset{P \overset\sim\to Y}{\text{hocolim }} u(P)\, ,
$$
where the homotopy colimit is indexed over the category 
of compact manifolds $P$ equipped with a weak equivalence to $Y$.
Notice that $u^\#$ restricted to ${\cal M}$
coincides with $u$ up to natural equivalence.

\begin{itemize}
 \item {\it Locality Axiom.} The functor $u^\#$ is excisive, i.e.,
it preserves homotopy cocartesian squares.
\end{itemize}

The above axioms  imply, by a version of Brown's representability theorem
(cf.\ appendix), that
the composite $u^\#$ is representable: 
there is an $\Omega$-spectrum
$E$, unique up to weak equivalence, and a natural weak equivalence
$$
u^\#(X) \,\, \simeq \,\, \text{map}(X,E)\,
$$
where $\text{map}(X,E)$ denotes the spectrum of (unbased) maps from
$X$ to $E$, i.e., the spectrum whose
$j$-th space is the space of unbased maps $X \to E_j$. 
(In the above, we are implicitly using the fact that every
compact manifold has the homotopy type of a finite complex. This
will imply that $u^\#$ is determined up to equivalence by its
restriction to the category of finite complexes over $X$.)

Notice that we can recover $E$ by taking 
of $u(*)$, where $*$ is the one-point manifold.
Summarizing,

\begin{thm} An umkehr functor $u\: {\cal M} \to {\cal S}$ is
characterized up to natural weak equivalence
by its value $E:= u(*)$ at the one point manifold.

Conversely,
an $\Omega$-spectrum $E$ gives rise to 
an umkehr functor $u$ by the rule 
$$
u(P) \,\, := \text{\rm map}(P,E)\, .
$$
\end{thm}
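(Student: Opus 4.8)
The plan is to prove both directions by unwinding the definitions already in place. For the converse direction, given an $\Omega$-spectrum $E$, I would set $u(P) := \text{map}(P,E)$ and verify the three axioms directly. The Vacuum Axiom is immediate: $\text{map}(\emptyset, E)$ is a point (in each spectrum level), hence contractible. Homotopy invariance follows because $E$ is an $\Omega$-spectrum and $P$, $Q$ have CW homotopy type, so a weak equivalence $P \to Q$ induces a weak equivalence $\text{map}(Q, E_j) \to \text{map}(P, E_j)$ on each level; this is exactly the point flagged in the Preliminaries about $\text{map}(K,E)$ having the correct homotopy type. For locality, I would observe that the Kan extension $u^\#$ of $P \mapsto \text{map}(P,E)$ agrees with $Y \mapsto \text{map}(Y, E)$ on spaces having the homotopy type of a finite complex (the homotopy colimit over $P \xrightarrow{\sim} Y$ is taken over a category with a terminal-up-to-homotopy structure, or one uses that $Y$ itself, when a finite complex, is among the objects), and then that $X \mapsto \text{map}(X, E)$ is excisive: a homotopy cocartesian square of spaces is sent by $\text{map}(-, E)$ to a homotopy cartesian square of spectra, since contravariant mapping into a fixed target turns homotopy pushouts into homotopy pullbacks. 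This is a standard fact but I would cite or sketch the Mayer–Vietoris/cofiber-sequence argument.

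For the forward direction, I would argue exactly as the paragraph preceding the theorem indicates: given an umkehr functor $u$ satisfying the axioms, form the Kan extension $u^\#\colon {\cal T} \to {\cal S}$, note it is homotopy invariant and (by the Locality Axiom) excisive, and apply the Brown representability theorem from the appendix to produce an $\Omega$-spectrum $E$, unique up to weak equivalence, with a natural weak equivalence $u^\#(X) \simeq \text{map}(X, E)$. Restricting along ${\cal M} \hookrightarrow {\cal T}$ and using that $u^\#|_{\cal M} \simeq u$, we get $u(P) \simeq \text{map}(P, E)$ naturally. Finally, evaluating at the one-point manifold $*$ gives $u(*) \simeq \text{map}(*, E) \simeq E$ (since $E$ is an $\Omega$-spectrum, $\text{map}(*, E_j) = E_j$), which identifies the representing spectrum as $u(*)$ and shows the classification is by the value at a point. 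Uniqueness of $E$ up to weak equivalence is part of the Brown representability statement, so the characterization up to natural weak equivalence follows.

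The main obstacle, and the point requiring the most care, is the comparison between $u^\#$ restricted to ${\cal M}$ and $u$ itself — i.e., checking that $u^\#|_{\cal M} \simeq u$ up to natural equivalence, which is asserted in the text but deserves justification. The issue is that the indexing category for the homotopy colimit defining $u^\#(P)$, namely compact manifolds mapping by a weak equivalence to $P$, contains $\text{id}_P$ as an object, but one must show this object is (homotopy) terminal or at least cofinal, so that the homotopy colimit collapses to $u(P)$; here the Homotopy Invariance Axiom is what makes the structure maps in the diagram weak equivalences, and one invokes cofinality of the identity together with homotopy invariance to conclude $\hocolim \simeq u(P)$. A secondary subtlety is ensuring the hypotheses of the appendix's Brown representability theorem are genuinely met — in particular that $u^\#$ is defined on all of ${\cal T}$ (or at least on finite complexes, which by the parenthetical remark suffices to pin down $E$) and sends weak equivalences and homotopy cocartesian squares to the appropriate thing — but given that the appendix is cited for precisely this purpose, I would treat it as a black box and concentrate the exposition on the Kan-extension bookkeeping and the three axiom verifications in the converse.
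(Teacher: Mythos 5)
Your proposal follows the same approach the paper uses---form the Kan extension $u^\#$, invoke the Brown representability theorem from the appendix to obtain an $\Omega$-spectrum $E$ with $u^\#(Y)\simeq\mathrm{map}(Y,E)$, and recover $E$ by evaluating at the one-point manifold---while also filling in the converse-direction axiom checks that the paper leaves implicit. The one small correction worth recording concerns the comparison $u^\#|_{\mathcal{M}}\simeq u$: the object $\mathrm{id}_P$ is \emph{terminal} in the category of compact manifolds equipped with a weak equivalence to $P$ and hence \emph{initial} in the opposite category over which the homotopy colimit is formed, so it is not right cofinal; what actually makes the homotopy colimit collapse to $u(P)$ is that the indexing category is contractible (having a terminal object) and, by Homotopy Invariance, every arrow in the diagram goes to a weak equivalence, so the homotopy colimit of a weakly constant diagram over a contractible category is equivalent to the value at any object.
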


\subsection*{Examples}

\subsubsection*{Example 1: The Pontryagin-Thom construction}  
The traditional Pontryagin-Thom construction comes 
from the umkehr functor corresponding to the spectrum  $E = u(*) = S^0$, 
the sphere spectrum.  That is,
$$
u(P) = \text{map}(P, S^0),
$$
the Spanier-Whitehead dual of $P$.  
The fact that this functor yields the Pontrjagin-Thom construction 
comes from Atiyah duality, which gives a natural equivalence of spectra,
$$
\text{\rm map}(P, S^0) \simeq P^{-\tau_P}
$$
where  $P^{-\tau_P}$  is the Thom spectrum of the stable normal bundle, that is, the virtual bundle $-\tau_P$, where $\tau_P$ is the tangent bundle of $P$.

Given an embedding $f\: P \to N$ with normal bundle $\nu(f)$,  
the map of Spanier-Whitehead duals, 
$f_! \: \text{\rm map}(N, S^0) \to \text{\rm map}(P, S^0)$ 
is equivalent to the Pontryagin-Thom map
$$
N^{-\tau_N} \to P^{-\tau_N \oplus \nu (f)} = P^{-\tau_P}.
$$

\subsubsection*{Example 2: Integration along the fibers}
Consider a smooth submersion of closed oriented manifolds,
$$
P^{n+k} \overset p\to M^n
$$
where the superscript denotes dimension.  Then integration along fibers defines a homomorphism in de\,Rham cohomology,
$$
p^{\int} : H^q_{\text{dR}}(P) \to H^{q-k}_{\text{dR}}(M).
$$
This can be seen in terms of the umkehr functor defined 
by setting $u(*) = h{\Bbb R}$, the Eilenberg-MacLane spectrum 
for the real numbers.  In other words, $u(N) = \text{map}(N, h{\Bbb R})$.  
The homomorphism induced by the bundle gives a homomorphism,
$$
\text{\rm map}(M, h{\Bbb R}) \overset{p_!}\to \text{\rm map}(P,  h{\Bbb R})
$$
which can be written as $p_! :\text{\rm map}(M, S^0)\wedge h{\Bbb R} \to 
\text{\rm map}(P, S^0)\wedge h{\Bbb R}$.  Using Atiyah duality, 
as in the previous example,  this is equivalent to a map 
which, by abuse of notation, we also call $p_!$,
$$
p_!\: M^{-\tau_M}\wedge h{\Bbb R} \to P^{-\tau_P}\wedge h{\Bbb R}.
$$
When one applies homotopy groups to this map, 
and the Thom isomorphism, one obtains a homomorphism,
$$
p_! \: H_{q-k}(M; {\Bbb R}) \to H_{q}(P; {\Bbb R})
$$which is linearly dual to the integration map $p^{\int}$.  That is, 
$$
\int_\alpha p^{\int} (\omega)  = \int_{p_!\alpha}\omega.
$$

\subsubsection*{Example 3: Oriented bordism}
For a space $X$, let $\text{\rm MSO}_p(X)$ denote bordism
classes of maps $P \to X$ where $P$ is a closed smooth
oriented $p$-manifold.
If  
$$
f\:Q \to  N 
$$
is a map of closed smooth oriented 
manifolds, then we obtain an umkehr homomorphism
$$
f_*^!\:\text{\rm MSO}_p(N) \to \text{\rm MSO}_{p+q-n}(Q)
$$
as follows.  Let $\gamma \in \text{\rm MSO}_p(N)$.   Choose a  representative $g\: P \to N$ 
of $\gamma$ in such
a way that $f$ and $g$ are mutually transverse. Then the fiber product
$P \times_N Q$ is an oriented manifold of dimension $p+q-n$, and the bordism 
class of evident
map $P \times_N Q \to Q$ defines the umkehr homomorphism.  Of course the spectrum representing the associated umkehr functor is the Thom spectrum, $\bf MSO$. \rm

\section{A generalization}
A generalization of the umkehr map arises
naturally within the framework of string topology.

The context is this:
one has an embedding $P \subset N$ of closed manifolds
with normal bundle $\nu$, and also a  (not necessarily smooth)  fiber bundle
$p\:E\to N$. Let $q\:E_{|P}\to P$ be the restriction of 
$p$ to $P$. Then we have a cartesian square
$$
\xymatrix{
E_{|P} \ar[r] \ar[d] & E \ar[d]\\
P \ar[r] & N\, .
}
$$
The spaces $E$ and $E_{|P}$  may not be smooth, and may even be infinite dimensional, (for example in string topology the total space $E$ is typically built from path or loop spaces in the manifold $N$). However   one 
still observes that the codimension of $E_{|P}$ in $E$ is finite, and
that one can find a regular neighborhood which is homeomorphic to 
the pullback of $\nu$ along $q$. Collapsing a complement
of this tubular neighborhood to a point, we obtain
a based map
$$
E^+ \to (E_{|P})^{q^*\nu}
$$
where the target is the Thom space of $q^*\nu$. Given this
construction, which seems depend on certain choices,
it is not entirely clear that it carries with it any
uniqueness properties. We will show in fact that it does. 
\medskip

It will be convenient for us to categorify the above.
The idea will be that the above umkehr map
can be thought of as arising from a suitable functor on the  
category of manifolds {\it over} $N$.  The representing objects in this setting will be fiberwise spectra.  For the sake of completeness, we begin with a digression describing those aspects of fiberwise spectra that we will need for our purposes.  The reader is referred to \cite{May-Sigurdsson} for a more complete discussion.

\subsection{Fibered Spectra}
One may regard a spectrum as a generalization of
an abelian group, where the latter appear
as the Eilenberg-Mac\, Lane spectra. 
Analogously
a fibered spectrum on a space $X$ 
can be thought of as a bundle of local coefficients on $X$ 
in which the fibers, which were formerly abelian groups, 
are now replaced by spectra. 
\medskip

For a space $X$, let  ${\cal R}_X$
be the category of {\it retractive spaces} over $X$. An object 
is a space $Y$ equipped with maps $s_Y\:X\to Y$ and $r_Y\: Y\to X$
such that $r_Y\circ s_Y$ is the identity map (the structure maps
$r_Y, s_Y$ are usually suppressed from the notation). 

A morphism $f\:Y\to Z$ is a
map of underlying spaces which commutes with their structure maps:
$r_Z\circ f = r_Y$ and $f\circ s_Y = s_Z$.
A morphism is a {\it weak equivalence} 
if it is a weak homotopy equivalence of underlying spaces.

One has a forgetful functor $u\:{\cal R}_X \to {\cal T}_X$.
There is also a functor $v$ in the other direction given by
$Y \mapsto Y^+$, where $Y^+$ is the retractive space $Y \amalg X$.
One readily verifies that $u$ is a right adjoint to $v$.

Given objects $Y,Z\in {\cal R}_X$, the hom-set 
$\hom_{{\cal R}_X}(Y,Z)$ may be topologized as a subspace
of the function space of all continuous maps $Y \to Z$ of underlying spaces,
where the function space is equipped with the compactly generated compact
open topology. This gives 
${\cal R}_X$ the structure of a of a topological category.
\medskip

\begin{defn}[Fiberwise suspension]
Given an object $Y \in {\cal T}_X$, its {\it unreduced fiberwise
suspension} is defined to be the double mapping cylinder
$$
S_X Y \,\, := \,\, X \times 0 \cup Y\times [0,1] \cup X \times 1\, .
$$
It comes with an evident map $S_X Y \to X$, so it is an object
of ${\cal T}_X$.

Given an object $Y \in {\cal R}_X$, its 
{\it reduced fiberwise suspension} is given by 
$$
\Sigma_X Y \,\, = \,\,  S_X Y \cup_{S_X X} X
$$
Note that $\Sigma_X$ defines an endo-functor of ${\cal R}_X$.

If $Y, Z$ are objects of ${\cal R}_X$, its fiberwise smash product 
$Y\smsh_X Z$ is the object given
the pushout of the diagram
$$
X \leftarrow Y\cup_X  Z \to  Y \times_X Z  \, .
$$
\end{defn}

\begin{defn}[Fibered spectra]\label{fibered}
A {\it fibered spectrum} 
$ {\cal E}$ over $X$
consists of  objects ${\cal E}_j \in {\cal R}_X$ for $j \in {\Bbb N}$ together
with (structure) maps
$$
\Sigma_X {\cal E}_j \to {\cal E}_{j+1} \,  ,
$$
for each $j \ge 0$.
A {\it morphism} ${\cal E}\to {\cal E}'$ is given by   
maps ${\cal E}_j \to {\cal E}'_j$ which are compatible with the
structure maps. 
\end{defn}

We say that ${\cal E}$ is {\it fibrant} if the adjoints to the structure
maps are weak homotopy equivalences of underlying spaces.
Any fibered spectrum ${\cal E}$ can be converted into a fibrant
one ${\cal E}^{\text{f}}$ in which 
$$
{\cal E}^{\text{f}}_j\,\, := \,\, \underset{n}{\text{hocolim\, }} 
\Omega^n_X {\cal E}_{j+n} \, ,
$$
where the homotopy colimit is taken in ${\cal R}_X$, and 
$\Omega_X^j$ is the adjoint to $n$-fold reduced fiberwise
suspension. The above is called {\it fibrant 
replacement.}

A morphism ${\cal E}\to {\cal E}'$ is a 
{\it weak equivalence} if the associated morphism of 
fibrant replacements  ${\cal E}^\text{f}\to ({\cal E}')^\text{f}$ 
is a {\it levelwise} weak equivalence: 
for each $j$, the map ${\cal E}_j^\text{f}\to ({\cal E}')_j^\text{f}$ 
is required to be weak equivalence of ${\cal R}_X$. 

\med
\subsubsection*{Examples}

\med
\noindent
{(1). Fiberwise suspension spectra} 

Let $Y \in {\cal R}_X$ be an object. Let $\Sigma^\infty_X Y$ be
the fibered spectrum over $X$ given by the collection $\Sigma^j_X Y$ of
iterated fiberwise suspensions of $Y$.

\med

\noindent{(2). Trivial fibered spectra} 

Let $C$ be a spectrum. The collection
of spaces $C_j \times X$ as $j$-varies forms a fibered spectrum over $X$. 
The maps  $\Sigma_X (C_j \times X) \to C_{j+1} \times X$ 
use the identification $\Sigma_X (C_j \times X) = (\Sigma C_j) \times X$
together with the structure maps of $C$. 

\med

\noindent{(3). Fibered Eilenberg-Mac\,Lane  spectra}

Let ${\cal F}$ be a bundle
of abelian groups on $X$
Let ${\cal F}_x$ denote the fiber at $x$.
Then we have a fibered spectrum $h{\cal F}$ on $X$, in which 
$h{\cal F}_j$ can be described as follows: the fiber at $x \in X$ is given by 
$K({\cal F}_x,j)$, the Eilenberg-Mac\,Lane space based on the
abelian group ${\cal F}_x$.

\med

\noindent{(4). Fiberwise smash product with a spectrum}

Let $C$ be a spectrum and a let $E\to X$ be 
a fibration. Then we obtain a fibered spectrum $E\otimes C$ in
whose $j$-th total space is given by the pushout of the diagram
$$
\begin{CD} 
X @<<< E \times *@>\subset >>  E \times C_j \, .
\end{CD}
$$
If $E_x$ is the fiber to $E\to X$ at $x\in X$, then the fiber
of $(E\otimes C)_j \to X$ is given by $(E_x)_+ \smsh C$.

\med
\noindent{(5). Twisted suspension} Let ${\cal E}$ be a fibered spectrum over
$X$. If $\xi$ is a vector bundle over $X$ we can form a new
fibered spectrum  
$ 
{}^\xi \!{\cal E} 
$ 
called the {\it twist} of ${\cal E}$ by $\xi$. The $j$-th total space
of ${}^\xi {\cal E}$ takes the form of a fiberwise smash product
 $$S^\xi \smsh_X {\cal E}_j ,$$
 where $S^\xi$ is the object of ${\cal R}_X$ given by the fiberwise
one point compactification of $\xi$.
The notion of  twisting extends to the case when $\xi$ is a virtual bundle
(we omit the details, but see example (2) below of the Poincar\'e duality equivalence (Theorem \ref{poincare})).

\eject
 
\noindent \bf {Homology}   \rm

\med
A fibered spectrum ${\cal E}$ gives rise to a 
covariant spectrum-valued functor 
$$
H_\bullet({-};{\cal E})\: {\cal T}_X \to {\cal S}
$$
called {\it homology} with ${\cal E}$-coefficients. 

Consider the following construction: let $Y \in {\cal T}_X$ be an object and
call the structure map $f\: Y \to X$. Let $f^*{\cal E}$
be the fibered spectrum over $Y$ given by the collection of fiber products
$Y \times_X {\cal E}_j$. The set of quotient spaces
$$
(Y \times_X {\cal E}_j)/Y 
$$
yields a spectrum. However,
we must take the derived version of this construction
to insure homotopy invariance.

Here are the details. First of all, we need to replace ${\cal E}$ with
its fibrant replacement ${\cal E}^{\text f}$. Secondly, we must replace
the above quotient, by a homotopy quotient, i.e., the mapping cone. The result
of these changes will produce a spectrum with $j$-th space
$$
(Y \times_X {\cal E}^{\text f}_j) \cup_Y CY \, .
$$
This spectrum is $H_\bullet(Y;{\cal E})$.

\begin{exs} The homology spectrum of the trivial fibered spectrum (example (2)
above)
 is $C \smsh Y_+$.

For the fibered Eilenberg-Mac\,Lane spectrum $h{\cal F}$ (example (3) above)
The homology spectrum of $Y$ has homotopy groups isomorphic
to the homology $Y$ with coefficients in the bundle of coefficients
${\cal F}$ pulled back to $Y$. 
\end{exs}

\noindent \bf {Cohomology}  \rm

\med
Given a fibered spectrum ${\cal E}$,
we obtain a contravariant spectrum-valued functor
$$
H^\bullet({-};{\cal E}) \:{\cal T}_X \to {\cal S}
$$
called {\it cohomology} with ${\cal E}$-coefficients. 
Roughly, it is given at an object $Y$ by taking the spectrum of sections
of ${\cal E}$ along $Y\to X$.

More precisely, consider the spectrum whose $j$-th space
is the hom-space $\hom_{{\cal T}_X}(Y, {\cal E}_j)$
(or equivalently, the space of sections of ${\cal E}_j \to X$
along $Y$). The structure maps for ${\cal E}$ yield structure
maps on these hom-spaces, so we obtain a spectrum.

To get a homotopy invariant version of this construction, we
need to replace ${\cal E}$ by its fibrant replacement, and
$Y$ by a functorial cellular approximation (for example,
we can replace $Y$ by $|SY|$, the realization of the simplicial
total singular complex of $X$). The result of these
manipulations yields a spectrum $H^\bullet(Y;{\cal E})$
which is homotopy invariant in $Y$.

\bg
 
\noindent \bf  {Poincar\'e duality}  \rm 

\med
Let $N$ be a closed manifold of dimension $d$ 
with tangent bundle $\tau_N$. 
 Let $-\tau_N$ be the virtual bundle of dimension $-d$ representing the
stable normal bundle of $P$.

We now state the Poincar\'e duality theorem with coefficients
in a fibered spectrum. 

\begin{thm}[Poincar\'e Duality]\label{poincare} For any
fibered spectrum ${\cal E}$ over $N$, there is a weak equivalence of spectra
$$
H_\bullet(N; {}^{-\tau_N}\!{\cal E}) \,\, \simeq \,\,  
H^\bullet(N;{\cal E})\, .
$$
The equivalence is natural in ${\cal E}$.
\end{thm}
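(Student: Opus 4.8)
The plan is to read the statement as a parametrized refinement of Atiyah--Spanier--Whitehead duality. Writing $p\colon N\to\ast$ for the collapse, $H^\bullet(N;-)$ is the (derived) functor of taking sections of a fibered spectrum along $p$ — the pushforward $p_*$ into $\mathcal S$ — while $H_\bullet(N;-)$ is the (derived) fiberwise-collapse functor $p_!$, so the theorem asserts that for a closed manifold $N$ there is a natural equivalence $p_!({}^{-\tau_N}\mathcal E)\simeq p_*\mathcal E$. Two soft facts will be used throughout. First, both functors $\mathcal E\mapsto H_\bullet(N;{}^{-\tau_N}\mathcal E)$ and $\mathcal E\mapsto H^\bullet(N;\mathcal E)$ are exact in the coefficient — they carry a levelwise cofiber sequence of fibered spectra over $N$ to a cofiber sequence of spectra (for homology because it is built from homotopy quotients, for cohomology because sections is a right adjoint and fiber and cofiber sequences agree in $\mathcal S$) — and, since $N$ is a finite complex, they commute with filtered homotopy colimits in the coefficient; moreover, as functors of the \emph{base}, both satisfy Mayer--Vietoris, carrying a homotopy cocartesian decomposition $N=A\cup_C B$ to the evident homotopy pushout, respectively homotopy pullback, of the values over $A,B,C$ with $\mathcal E$ restricted. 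Second, ${}^{\xi}(-)$ is an exact self-equivalence of fibered spectra over $N$, and fixing an embedding $N\subset\mathbb R^n$ with normal bundle $\eta$, so that $\tau_N\oplus\eta\cong\underline{\mathbb R}^n$, we have ${}^{-\tau_N}\mathcal E\simeq\Sigma^{-n}\,{}^{\eta}\mathcal E$.

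Next I would construct the comparison natural transformation via a Pontryagin--Thom collapse. Let $U\supset N$ be a tubular neighborhood, identified with the total space of $\eta$, with bundle projection $r\colon U\to N$; since $r$ is a homotopy equivalence, pulling a section back along $r$ identifies $H^\bullet(N;\mathcal E)$ with the section spectrum of $r^*\mathcal E$ over $U$. The collapse $S^n\to U/\partial U$, together with the identification of $U/\partial U$ with the quotient by $N$ of $S^\eta\in\mathcal R_N$ (the fiberwise one-point compactification of $\eta$), then defines, for each $j$, a natural map
\[
S^n\wedge\Gamma\bigl(r^*\mathcal E_j\bigr)\;\longrightarrow\;\bigl(S^\eta\wedge_N\mathcal E_j\bigr)/N ,
\]
sending $(v,\sigma)$ with $v\in U$ over $x=r(v)$ to the class of $(v,\sigma(x))$ and everything outside $U$ to the basepoint. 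Assembling over $j$ and desuspending by $n$ gives a natural transformation $H^\bullet(N;\mathcal E)\to H_\bullet(N;{}^{-\tau_N}\mathcal E)$ which, since $\mathcal E$ only enters by being evaluated, is visibly natural in $\mathcal E$ and compatible with restriction to open submanifolds of $N$.

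It then remains to prove this transformation is a weak equivalence, and here I would argue by d\'evissage. For a \emph{trivial} fibered spectrum $C\times N$ the statement is exactly classical Atiyah duality $F(N_+,\mathbb S)\simeq N^{-\tau_N}$ smashed with $C$ and reinterpreted through the Examples above; relatively, for a compact manifold with boundary it is Atiyah--Lefschetz duality, which on a single handle $D^k\times D^{d-k}$ reduces with trivial coefficients to an evident identity. For general $\mathcal E$, fix a finite handle decomposition of $N$; on each handle $\mathcal E$ restricts to something weakly equivalent to a trivial fibered spectrum, where the comparison map is therefore an equivalence, and one climbs the handle filtration: homology sends the homotopy cocartesian squares to homotopy pushouts of spectra, cohomology to the corresponding homotopy pullbacks, these agree in $\mathcal S$, and the comparison map respects the gluing by its naturality, so an inductive five-lemma argument propagates the equivalence to $N$. (Alternatively: both functors are exact and finitary in the coefficient, every fibered spectrum over $N$ is built under homotopy colimits and cofiber sequences from fiberwise suspension spectra of retractive spaces supported in Euclidean charts, and on those the claim again reduces to the charts' local Atiyah--Lefschetz duality.)

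The step I expect to be the genuine obstacle is this last one — upgrading ``true for trivial coefficients on a chart or handle'' to ``true for an arbitrary fibered spectrum over $N$.'' The two functors behave differently on non-closed pieces (the non-compactly-supported homology of an open handle does not satisfy naive duality), so the induction must keep scrupulous track of the relative, Lefschetz-type form of the statement — or be reorganized around honestly dualizable generators — and the real work is verifying that the single comparison map of the second step restricts correctly to all the pieces and that the Mayer--Vietoris patching respects it. If one prefers, this is the Wirthm\"uller/Costenoble--Waner duality of \cite{May-Sigurdsson} for $p\colon N\to\ast$, and taking that formalism off the shelf the argument collapses to identifying the dualizing object as $S^{-\tau_N}$.
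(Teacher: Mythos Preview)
Your outline is sound, but note that the paper does not actually prove this theorem: immediately after the statement it says ``Although usually stated differently, this result appears in the literature'' and cites \cite{Klein_dualizing}, \cite{Klein_dualizing_2}, \cite{PoHu}, and \cite{WW1}. So there is no in-paper proof to compare against; the authors defer entirely to the references.

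That said, your strategy---construct a comparison map via the Pontryagin--Thom collapse of an embedding $N\subset\mathbb R^n$, verify it for trivial coefficients by classical Atiyah duality, then climb a handle decomposition using Mayer--Vietoris and the five lemma---is a correct and standard route to parametrized Poincar\'e duality, and your caveat that the induction must be run in the relative (Lefschetz) form on handles is exactly the point that needs care. Among the cited references this is closest in spirit to \cite{WW1}. The Klein papers \cite{Klein_dualizing}, \cite{Klein_dualizing_2} proceed quite differently, via the dualizing spectrum $D_G$ of a topological group (with $G$ a loop space of $N$) and a norm map, while \cite{PoHu} works equivariantly. Your closing remark that the May--Sigurdsson Wirthm\"uller/Costenoble--Waner machinery \cite{May-Sigurdsson} also yields the result by identifying the dualizing object of $p\:N\to\ast$ as $S^{-\tau_N}$ is another legitimate off-the-shelf route, and indeed the one closest to how such results are typically quoted today.
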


Although usually stated differently,
this result appears in the literature (see
\cite[thms.\ A,D]{Klein_dualizing}, \cite[\S5,8]{Klein_dualizing_2}, 
\cite[th.\ 4.9]{PoHu}, \cite[prop.\ 2.4]{WW1}).

\begin{defn} A closed $n$-manifold $N$ is {\it ${\cal E}$-orientable}
if there is a weak equivalence of fibered spectra
$$
{}^{-\tau_N}\!{\cal E} \,\, \simeq \,\, {\cal E}[-n]
$$
where ${\cal E}[-n]$ is the $n$-fold fiberwise desuspension of
${\cal E}$.
\end{defn}

\begin{cor} Assume $N$ is ${\cal E}$-orientable. Then there is
a weak equivalence of spectra
$$
H_\bullet(N; {\cal E}[-n]) \,\, \simeq\,\, 
H^\bullet(N;{\cal E}) \, .
$$
\end{cor}

\subsubsection*{Examples}

\med
\noindent
{(1).  (\sl Atiyah and Spanier-Whitehead duality)  \rm  

\med
Let $\cal E$ be the trivial suspension
spectrum, $\Sigma^\infty_NN$.  In other words, the $j^{th}$-space is given by
$$
(\Sigma^\infty_NN)_j = S^j \times N \to N
$$
so the fiber over any point is the sphere $S^j$.  We can describe the twisted spectrum,
$
{}^{-\tau_N}\!{(\Sigma^\infty_NN)}$ in the following way.  Suppose we have an embedding in Euclidean space,  $N \hookrightarrow \br^L$, with normal bundle $\nu_L \to N$. Then for any $j \geq 0$, the $(j+L)^{th}$ space of the twisted spectrum is given by
$$
{}(^{-\tau_N}\!{\Sigma^\infty_NN})_{j+L} = S^{\nu_L}\wedge_{N}(\Sigma^\infty_NN)_j = S^{\nu_L}\wedge S^j.
$$
Then clearly the homology spectrum, $$H_\bullet(N; {}^{-\tau_N}\!{\Sigma^\infty_NN }) = N^{-\tau_N}
$$
the Thom spectrum of the virtual bundle $-\tau_N$.   On the other hand, the cohomology spectrum,
$H^\bullet(N; \, \Sigma^\infty_NN)$  has as its $j^{th}$-space the space of sections,
$$ \hom_{\ct_N}(N,  S^j \times N) =\text{\rm map}(N, S^j). $$   In other words, this cohomology spectrum is the  mapping  spectrum $\text{\rm map}(N, S^0)$, 
or the Spanier-Whitehead dual of $N_+$.  
Thus the Poincar\'e duality equivalence Theorem \ref{poincare} in this case gives the Atiyah duality,
$$
N^{-\tau_N} \simeq \text{map}(N , S^0).
$$

\med
 (2).  (\sl The free loop space and string topology)  \rm 
 
 \med
   Let $LN = \text{\rm map}(S^1, N)$ be the 
free loop space, and let $e : LN \to N$ be 
the fibration that evaluates a loop at the basepoint 
$0 \in \br/\bz = S^1$. The fiber at $x_0$ is the based loop space, 
$\Omega_{x_0}N$.   There is a section $\sigma : N \to LN$ of this fibration by
 considering a point $x \in N$ as the constant loop at $x$.   We consider the fiberwise suspension spectrum, $\ce = \Sigma^\infty_N LN$.  This fibered spectrum  has as its $j^{th}$ space  the $j$-fold fiberwise reduced suspension,
 $\Sigma^j_NLN$, which fibers over $N$, with fiber $\Sigma^j (\Omega N)$.  We consider  the Poincar\'e duality equivalence (Theorem \ref{poincare}) in the case of this fibered spectrum.  
     
     We consider the twisted spectrum ${}^{-\tau_N}\!{(\Sigma^\infty_NLN)}$.  This fibered   spectrum can be described in the following way.  Suppose, as above, $N \hk \br^L$ with normal bundle $\nu_L \to N$.  Then for any $j \geq 0$, the $(j+L)^{th}$ space of the twisted spectrum is given by
$$
{}(^{-\tau_N}\!{\Sigma^\infty_NLN})_{j+L} = S^{\nu_L}\wedge_{N}(\Sigma^j_NLN).   
$$
Then clearly the homology spectrum is given by, 
\begin{equation}\label{loophom}
H_\bullet(N; {}^{-\tau_N}\!{\Sigma^\infty_NLN }) = LN^{-\tau_N}
\end{equation}
the Thom spectrum of the virtual bundle $e^*(-\tau_N)$.   It was shown in \cite{cohenjones} that the spectrum $LN^{-\tau_N}$ is a ring spectrum, whose induced product in homology reflects the Chas-Sullivan loop product in \sl string topology \rm \cite{chassullivan} after one applies the Thom isomorphism.  This product can be seen by applying the Poincar\'e duality equivalence (Theorem \ref{poincare}) as follows.   

The cohomology spectrum,
$H^\bullet(N; \, \Sigma^\infty_NLN)$  has as its $j^{th}$-space the space of sections,
$\hom_{\ct_N}(N, \Sigma^j_N LN).$  We therefore write this spectrum as 
$\hom_{\ct_N}(N, \Sigma^\infty_NLN)$.   
The Poincar\'e duality equivalence in this setting gives an equivalence,
\begin{equation}\label{compare}
LN^{-\tau_N} \simeq \hom_{\ct_N}(N, \Sigma^\infty_NLN).
\end{equation}
  Now notice that the fiberwise spectrum $\Sigma^\infty_NLN$ is a fiberwise ring spectrum, since the fibration $\Omega N \to LN \to N$ is a fiberwise monoid.  ( More precisely it is  a fiberwise $A_\infty$-monoid.  See \cite{gruhersalvatore}.)  Thus the spectrum of sections, $\hom_{\ct_N}(N, \Sigma^\infty_NLN)$ is a ring spectrum.  This ring spectrum structure reflects
  the ring structure in $LN^{\tau_N}$, and thus reflects the string topology loop product.
}

%----------------------------------------------------------------------

\medskip
\subsection{Generalized umkehr functors}
 
Let $X$ be a topological space.
Let ${\cal M}_X$ be the category whose objects
are compact manifolds $P$ (possibly
with boundary) equipped with a map $P \to X$; the
map will not usually be specified in the notation. A morphism
is a map $f\:P \to Q$ which is compatible with maps to $N$ in the
obvious way (again, we do not require that
$f$ preserves boundaries). 
A morphism is a weak equivalence if and only if the
underlying map of spaces is a weak homotopy equivalence.

We will  consider
contravariant functors
$$
u\: {\cal M}_X \to {\cal S} \, .
$$

\begin{defn}
A  functor $u$ will be called  a {\it generalized umkehr functor}
if it satisfies three axioms. The first two axioms are:
\begin{itemize}
\item {\bf Axiom 1} (Vacuum). The value of
$u$ at the empty manifold $\emptyset$ is contractible. 
\item {\bf Axiom 2} (Homotopy Invariance). 
$u$ is a homotopy functor, i.e., if 
a morphism $f\: P \to Q$
is a weak (homotopy) equivalence, then so is
 $u(f)$.
\end{itemize}
\end{defn}

Let ${\cal T}_X$ be the category of spaces
over $X$. An object of this category consists
of a space $Y$ together with map $Y\to X$ (the latter
not usually specified). A morphism $Y\to Z$ is map
of underlying spaces that is compatible with maps to $X$.
As before, we can perform a left homotopy Kan extension
to $u$ along the  full inclusion ${\cal M}_X \subset {\cal T}_X$
to obtain a contravariant homotopy functor
$$
u^\#\: {\cal T}_X \to {\cal S}\, .
$$
The final axiom for generalized umkehr functors is
\begin{itemize}
\item {\bf Axiom 3} (Locality). 
The functor $u^\#$ preserves homotopy cocartesian squares.
\end{itemize}
(a square of ${\cal T}_X$ is homotopy cocartesian if
it is one when considered in ${\cal T}$ by means of the forgetful
functor.)

Again, we see that these axioms imply that  
$u^\#$ is representable. (The appropriate fiberwise version of Brown representability will be proved in the appendix.)  In this fiberwise setting,  representability means  there is a
fibered spectrum ${\cal E}$, unique up to
equivalence, and a natural weak equivalence
$$
u^\#(X) \,\,  \simeq \,\, H^\bullet(X;{\cal E})\, .
$$
In particular ${\cal E}$ and  $u$ determine one another. 
Summarizing,

\begin{thm} A fibered spectrum ${\cal E} \to N$ gives rise to
an umkehr functor by the rule
$$
u(P) \,\, := \,\, H^\bullet(P; {\cal E})\, .
$$

Conversely, a functor $u$ that satisfies axioms 1-3
determines a fibered spectrum ${\cal E} \to N$,
unique up to weak equivalence, whose associated
cohomology recovers $u$ up to natural equivalence.
\end{thm}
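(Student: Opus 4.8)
\emph{The strategy} is to carry over the proof of the absolute case (the theorem at the end of Section~3) to the fiberwise setting, with the mapping spectrum $\text{map}(-,E)$ replaced throughout by the cohomology spectrum $H^\bullet(-;{\cal E})$ and ordinary Brown representability replaced by its fiberwise version, established in the appendix. For the first assertion, given a fibered spectrum ${\cal E}$ over $X$ I would set $u(P):=H^\bullet(P;{\cal E})$, formed via the structure map $P\to X$, and verify Axioms~1--3. Axiom~1 is immediate since the section spectrum of ${\cal E}$ over the empty space is a point. Axiom~2 holds because $H^\bullet(-;{\cal E})$ is built to be a homotopy functor (fibrant replacement of ${\cal E}$ together with a functorial CW approximation of the argument). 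For Axiom~3 one must identify the Kan extension $u^\#$: a finite CW object over $X$ is homotopy equivalent over $X$ to a compact manifold over $X$ (embed it in $X\times{\Bbb R}^k$ and take a regular neighborhood), so by homotopy invariance $u^\#$ and $H^\bullet(-;{\cal E})$ agree on finite CW objects over $X$, and since both are determined up to equivalence by their restrictions to such objects --- for $u^\#$ by construction, for $H^\bullet(-;{\cal E})$ by a Milnor-sequence argument over finite subobjects --- they agree on all of ${\cal T}_X$. It then remains only to note that $H^\bullet(-;{\cal E})$ is excisive: a homotopy cocartesian square of spaces over $X$ is carried to a homotopy cartesian square of section spectra, and such a square of spectra is, stably, also homotopy cocartesian; hence $u^\#$ preserves homotopy cocartesian squares.

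\emph{For the converse,} let $u$ satisfy Axioms~1--3 and form $u^\#\colon{\cal T}_X\to{\cal S}$. By construction $u^\#$ is a homotopy functor with $u^\#|_{{\cal M}_X}\simeq u$; Axiom~3 says $u^\#$ is excisive; and $u^\#$ carries finite coproducts to products, since $Y_1\amalg Y_2$ is the homotopy pushout of $Y_1\leftarrow\emptyset\to Y_2$ and $u^\#(\emptyset)$ is contractible by Axiom~1 --- the extension to arbitrary coproducts being handled in the appendix. Thus $u^\#$ meets the hypotheses of the fiberwise Brown representability theorem, which produces a fibered spectrum ${\cal E}$ over $X$, unique up to weak equivalence, and a natural weak equivalence $u^\#(Y)\simeq H^\bullet(Y;{\cal E})$; restricting to ${\cal M}_X$ and using $u^\#|_{{\cal M}_X}\simeq u$ gives $u(P)\simeq H^\bullet(P;{\cal E})$. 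For uniqueness, two representing fibered spectra ${\cal E}$ and ${\cal E}'$ give $H^\bullet(-;{\cal E})\simeq H^\bullet(-;{\cal E}')$ naturally on ${\cal T}_X$; evaluating on the inclusions $\{x\}\hookrightarrow X$ pins down the stalks $({\cal E}^{\text{f}}_j)_x$ and evaluating on the identity of $X$ pins down the global section spectra, and a fibrant object of ${\cal R}_X$ is determined up to weak equivalence by this data --- the precise form of that last point being built into the representability theorem of the appendix.

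\emph{The main obstacle} is the fiberwise Brown representability theorem itself, which is why it is deferred to the appendix. Granting it, the only delicate bookkeeping is the comparison between ${\cal M}_X$ and the finite CW objects over $X$: that compact manifolds over $X$ are ``large enough'' for $u^\#$ to be computed from them, that the Kan extension restricts back to $u$ up to equivalence, and that the wedge axiom follows from Axioms~1--3. This is the fiberwise counterpart of the parenthetical remark, in the absolute case, that every compact manifold has the homotopy type of a finite complex.
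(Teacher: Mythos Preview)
Your proposal follows the paper's own approach: the paper does not give a standalone proof of this theorem but simply says that axioms~1--3 make $u^\#$ representable and defers to the fiberwise Brown representability theorem in the appendix, which is exactly your strategy. Your write-up is in fact more detailed than the paper's on the forward direction (verifying that $H^\bullet(-;{\cal E})$ satisfies the axioms), where the paper says nothing explicit.

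One small correction: in your argument that every finite CW object over $X$ is weakly equivalent over $X$ to an object of ${\cal M}_X$, the phrase ``embed it in $X\times{\Bbb R}^k$ and take a regular neighborhood'' is not right when $X$ is not a manifold, since $X\times{\Bbb R}^k$ then has no notion of regular neighborhood. What you want is to forget the structure map, embed the underlying finite complex $Y$ in some ${\Bbb R}^k$, take a compact regular neighborhood $P$ there (a smooth manifold with boundary), and equip $P$ with the structure map $P\to Y\to X$ given by the deformation retraction followed by the original map; the retraction $P\to Y$ is then a weak equivalence over $X$. With that adjustment your argument goes through and matches the paper's intended reasoning.
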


\subsection*{The generalized umkehr homomorphism}
Let ${\cal E} \to X$ be a fibered spectrum, and suppose
$$
f\: P \to Q
$$
is a morphism of ${\cal M}_X$ such that $P$ and $Q$ are closed manifolds.
We then have an induced map on cohomology spectra
$$
f^\bullet\:H^{\bullet}(Q;{\cal E}) \to H^{\bullet}(P;{\cal E})
$$
using the Poincar\'e duality equivalence, we can rewrite this
up to homotopy as a map
$$
f^!\: H_\bullet(Q;{}^{-\tau_Q}\!{\cal E})\to
H_\bullet(P;{}^{-\tau_P}\!{\cal E})
$$
Assume now that $P$ and $Q$ are ${\cal E}$-oriented. Then
taking homotopy groups of $f^!$, we get a homomorphism
$$
f^!_*\: H_*(Q;{\cal E})\to
H_{*+q-p}(P;{\cal E})\, .
$$
This is the generalized umkehr homomorphism.

\subsection*{Umkehr maps in string topology}

As seen in Example 2 of the Poincar\'e duality equivalence, the basic ring
structure structure arising in string topology can be seen via the equivalence (\ref{compare}) of $LM^{-\tau_M}$ and the ring spectrum, $\hom_{\ct_M}(M, \Sigma^\infty_MLM)$. Here $M$ is a closed manifold.    However in its original form \cite{chassullivan} and \cite{cohenjones}, the string topology product was created via an umkehr map.    We now see how this fits
into our framework.

  $L^\infty M $ be the space of maps from the figure eight
$S^1 \vee S^1$ to $M$. This space is the fiber product, $LM \times_M LM$.  That is, we have a   a pullback square
$$
\begin{CD}
L^\infty M @>\subset >> LM \times LM \\
@VeVV @VVe \times eV \\
M @>> \Delta > M \times M
\end{CD}
$$
where $\Delta$ is the 
a diagonal map, the vertical maps of the square are
the fibrations given by evaluation at the basepoint, and
the upper horizontal map arises from the quotient map
$S^1 \amalg S^1 \to S^1 \vee S^1$ by taking maps into $M$.

Let $h{\Bbb Z}$ be the Eilenberg-Mac\,Lane spectrum on the integers. Consider the product
$$
LM \times LM \xr{e \times e} M \times M
$$
as an object in $\sr_{M\times M}$.  We consider the  fiberwise smash product
spectrum $\ce = (LM \times LM) \otimes h\bz$  (See example (4)  after Definition (\ref{fibered}) above.)

In particular, $(LM \times LM) \otimes h\bz$  is  the fibered spectrum whose $j$-th space
is given by the pushout of the diagram
$$
\begin{CD}
M \times M @<<< LM\times LM \times * @> \subset >> 
L M \times LM \times (h{\Bbb Z})_j \, .
\end{CD}
$$

The umkehr homomorphism taken with respect to the fibered spectrum $(LM \times LM) \otimes h\bz$, applied to the diagonal map $\Delta : M \to M \times M$ viewed as a morphism in $\cm_{M\times M}$  is then computed by the induced map in cohomology spectra,
\begin{align}\label{stringcoho}
  H^{\bullet}(M\times M;{(LM \times LM) \otimes h\bz }) &\xr{\Delta^{\bullet}} H^{\bullet}(M;{(LM \times LM) \otimes h\bz})  \\
 \hom_{\ct_{M\times M}}(M \times M, (LM \times LM) \otimes h\bz )  &\xr{\Delta^*}   \hom_{\ct_{M\times M}}(M , (LM \times LM) \otimes h\bz )   \notag \\
 &= \hom_{\ct_M}(M, L_\infty M\otimes h \bz), \notag
 \end{align}
 and then apply the Poincar\'e duality equivalence (Theorem \ref{poincare}).  
But by an argument   completely analogous to that used to verify (\ref{loophom}) in Example (2) of the Poincar\'e duality equivalence, we see that
 
$$
H_\bullet(M\times M ; {}^{-\tau_{M\times M}}\!{((LM \times LM) \otimes h\bz })) = LM^{-\tau_M} \wedge LM^{-\tau_M}\wedge h\bz    
$$
If $M$ is oriented in singular homology, this last spectrum is equivalent to 
$\Sigma^{-2m}(LM \wedge LM)_+ \wedge h\bz$.    Similarly, from the above pull back square we see that
$$
H_\bullet(M  ; {}^{-\tau_{M\times M}}\!{((LM \times LM) \otimes h\bz })) = L_\infty M^{-\tau_M} \wedge h\bz
$$
where the last spectrum is equivalent to $\Sigma^{-m}(L_\infty M)_+ \wedge h\bz$  assuming $M$ is equipped with an orientation. Thus the umkehr map in this situation
gives a map
$$
\Sigma^{-2m}LM  \wedge LM \wedge h\bz  \to \Sigma^{-m}L_\infty M  \wedge h\bz,
$$ or by taking homotopy groups this 
takes the form
$$
H_*(L M \times LM ) \to H_{*-m}(L^\infty M) \, .
$$
The Chas-Sullivan loop product is given by the composite
\begin{align}
 H_{p}(L M)\otimes H_q(LM) &\to  H_{p+q}(L M \times LM )  \notag \\
&\to
 H_{p+q-m}(L^\infty M) \to H_{p+q-m}(LM), \notag
\end{align}
where the first homomorphism 
 is the external product, the second is the umkehr homomorphism
and the third is given by taking
the homology of the map of spaces $L^\infty M \to L M$  arising from
 pinch map $S^1 \to S^1 \vee S^1$.

\section{Appendix: Representability}

The purpose of this section is to outline a proof the representability theorem
for contravariant functors $f\:{\cal T}_X \to {\cal S}$.

\begin{defn} A functor $f$ is said to be {\it  excisive} if
for any collection of objects $Y_\alpha$, the natural map
$$
f(\coprod_\alpha Y_\alpha) \to \prod_{\alpha} f(Y_\alpha)
$$
is a weak equivalence.
\end{defn}

\begin{rem} This condition can be stated alternatively
as saying that $f$ preserves homotopy pushouts and that
up to weak equivalence, $f$ is determined up to equivalence by its
restriction to the full subcategory of 
finite complexes over $X$.

The last condition means  that
the natural map
$$
f(Y) \to \underset{Z \in C_Y }{\text{holim }} f(Z)
$$ 
is a weak equivalence, where the homotopy limit is
indexed over the category $C_Y$ consisting of spaces over $Y$
which are homeomorphic to a finite complex.           
\end{rem}

\begin{defn} A functor $f\: {\cal T}_X \to {\cal S}$
is {\it cohomological} if
\begin{itemize}
\item $f$ is a homotopy functor.
\item The value of $f$ at the initial object $\emptyset$ is
contractible.
\item $f$ is strongly excisive.
\end{itemize}
\end{defn}

\begin{thm}[Representability] \label{rep} For 
cohomological functors $f$,
 there is a fibered spectrum ${\cal E}$ and a natural equivalence
of functors
$$
f(Y)\,\,  \simeq \,\, H^\bullet(Y;{\cal E})
$$
\end{thm}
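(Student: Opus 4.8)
The plan is to reduce the fiberwise representability theorem to the classical (non-fiberwise) Brown representability theorem applied ``over $X$,'' by exploiting the fact that a fibered spectrum over $X$ is nothing more than a functor out of a suitable category of cells in $X$, together with a descent/gluing argument. First I would restrict attention to the full subcategory $\mathcal{C}$ of $\mathcal{T}_X$ consisting of objects $Y \to X$ that are finite CW complexes (relative cells mapping to $X$); by the excision hypothesis (the second formulation in the Remark), $f$ is determined up to natural equivalence by its restriction to $\mathcal{C}$, so it suffices to represent $f|_{\mathcal{C}}$. On $\mathcal{C}$, the three conditions defining ``cohomological'' (homotopy invariance, vanishing at $\emptyset$, and strong excision, i.e.\ carrying finite coproducts to products and homotopy pushouts to homotopy pullbacks) are exactly the hypotheses of a spectrum-valued Brown representability theorem, the only twist being that everything is happening in the slice category over $X$.

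The key steps, in order, are as follows. (1) \emph{Set up the target category.} Observe that fibered spectra over $X$, with the weak equivalences of Definition \ref{fibered}, form a homotopy category to which $H^\bullet(-;-)$ is functorial; the assignment $\mathcal{E} \mapsto H^\bullet(-;\mathcal{E})$ lands in cohomological functors $\mathcal{T}_X \to \mathcal{S}$ (homotopy invariance and the excision properties of $H^\bullet$ follow from its construction as a homotopy-invariant section spectrum, since sections convert homotopy pushouts of the source into homotopy pullbacks). (2) \emph{Build the representing object.} Apply the cellular/inductive Brown-representability machine: define $\mathcal{E}_j$ cell-by-cell over $X$. Concretely, one builds a fibered $\Omega$-spectrum $\mathcal{E}$ by choosing, for each cell $e \to X$ of (a CW model of) $X$, the value $f$ assigns to that cell, and gluing along attaching maps using that $f$ sends homotopy pushouts to homotopy pullbacks. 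This is the standard construction of a classifying object as a homotopy colimit of representables, and strong excision is precisely what makes the comparison map from the representable functor to $f$ a weak equivalence on each cell. (3) \emph{Compare.} Produce the natural transformation $H^\bullet(-;\mathcal{E}) \to f$ (or its inverse) by the usual Yoneda-type argument using a universal class in $f$ applied to the total space of $\mathcal{E}$, check it is an equivalence on finite complexes over $X$ by induction on cells using excision, and then extend to all of $\mathcal{T}_X$ by the excision/$\mathrm{holim}$ property. (4) \emph{Uniqueness.} If $\mathcal{E}, \mathcal{E}'$ both represent $f$, the identity natural transformation is classified by a map $\mathcal{E} \to \mathcal{E}'$ which is a weak equivalence fiberwise, hence a weak equivalence of fibered spectra.

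The main obstacle I expect is step (2): making the ``cell-by-cell'' construction of the fibered spectrum $\mathcal{E}$ genuinely functorial and coherent over $X$, rather than just producing fibers $\mathcal{E}_x$ without the gluing data. In the non-fiberwise setting one gets away with a telescope of representables because one is only tracking a single space up to homotopy; here one must assemble the pieces into an honest object of $\mathcal{R}_X$ for each level $j$ and produce the structure maps $\Sigma_X \mathcal{E}_j \to \mathcal{E}_{j+1}$ compatibly. The cleanest route is probably to phrase the whole argument in the language of homotopy colimits over the category $C_X$ of finite complexes over $X$: set $\mathcal{E}_j := $ a fibrant fibered spectrum corepresenting the functor $Y \mapsto \Omega^\infty \Sigma^j f(Y)$, obtained as $\hocolim$ over $C_X$ of the ``fiberwise cells'' weighted by $f$, and verify that the excision axiom upgrades the levelwise comparison to an equivalence of fibered spectra. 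Everything else — the homotopy-invariance bookkeeping, the reduction to finite complexes, the Yoneda argument, and uniqueness — is routine once this representing object is in hand.
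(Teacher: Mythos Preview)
Your overall architecture---restrict to finite complexes, build a representing object, compare, then extend---is sound, and you correctly identify step~(2) as the crux. But the gap there is more than coherence bookkeeping. The values $f(e)$ on cells $e\to X$ are \emph{spectra}, not retractive spaces over $X$, so there is no direct sense in which one can ``glue'' them into an object of ${\cal R}_X$. The classical Brown telescope builds a representing \emph{space} by attaching cells to kill homotopy; transporting that to produce a \emph{fibered} object over $X$ requires a mechanism that converts the cohomological (limit-type) data $\{f(\Delta^p)\}$ into a homological (colimit-type) object over $X$, and your proposal does not supply one. Your fallback---a homotopy colimit over $C_X$ ``weighted by $f$''---points in the right direction but is not a construction, and the Yoneda/universal-class comparison in step~(3) does not obviously make sense here: there is no evident identity section of a fibered spectrum to play the role of the universal class.

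The paper resolves exactly this issue by a different route. It introduces the \emph{coassembly map}
\[
c\colon f(Y)\ \longrightarrow\ f^\natural(Y)\ :=\ \underset{\Delta^p\to Y}{\text{holim}}\ f(\Delta^p),
\]
indexed over the singular simplex category $\Delta_Y$, and shows $c$ is an equivalence when $f$ is cohomological (trivially on points, then by Mayer--Vietoris on finite complexes, then by strong excision in general). The representing fibered spectrum is then built \emph{directly} as ${\cal E}_j := \text{hocolim}_{\Delta_X}\, f(\Delta^p)_j$, which sits over $B\Delta_X\simeq X$ via the map to the hocolim of the constant point functor. The missing ingredient in your sketch is an observation of Dwyer: the space of sections of this hocolim over $Y\to X$ is naturally equivalent to the holim defining $f^\natural(Y)$. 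That single identification is the limit-to-colimit passage your step~(2) needs, and it replaces both your inductive construction and your Yoneda comparison at once.
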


\begin{rems} (1). The fibered spectrum 
${\cal E}$ is unique up to equivalence. Heuristically, the 
value of $H^\bullet({-}; {\cal E})$ at
the one point maps $x\to X$ recovers
the fibers ${\cal E}_x$ of ${\cal E}$. The homotopy
colimit in the category of {\it un}based spaces of 
$({\cal E}_x)_j$ recovers the $j$-th total space ${\cal E}_j$
up to equivalence.

{\flushleft (2).} Our method of proof can be adapted
to show that the functor
$$
{\cal E} \mapsto H^\bullet({-};{\cal E})
$$
defines an equivalence the homotopy category of 
fibered spectra over $X$ and  the homotopy category
of cohomological functors. We will not need this statement.
\end{rems}

The main tool in the proof of \ref{rep} is
a natural transformation
$$
f(Y) \to f^\natural(Y)\, ,
$$
called the {\it coassembly map},
which is defined for any homotopy functor $f$. The target functor
$f^\natural$ is always strongly excisive. 

The idea will then be to show that the coassembly map is 
a weak equivalence when $f$ satisfies our assumptions, and
that $f^\natural$ is representable.

\subsection*{The coassembly map}
Let $f^\natural$ be the functor defined by
$$
f^\natural(Y) = \underset{\Delta^p \to Y}{\text{holim }} f(\Delta^p) \, ,
$$
where the homotopy limit is indexed over the category $\Delta_Y$  of singular
simplices in $Y$. This is the category whose objects are maps
$\Delta^p\to Y$  (for $p \ge 0$), where $\Delta^p$ is the standard $p$-simplex,
and morphisms are given by inclusions of faces. 

Given any map $\Delta^p \to Y$ we obtain a map $f(Y) \to f(\Delta^p)$.
This assignment is compatible with taking faces, so we get a natural map
$$
c: f(Y) \to f^\natural(Y)\, .
$$ 
This is the coassembly map.

We now verify the properties of $f^\natural$.
Note that $f^\natural$ is a homotopy functor since 
$f$ is and the homotopy limit
construction is homotopy invariant. Furthermore, $f^\natural$ 
is strongly excisive
because 
$$
\Delta_{\amalg_\alpha Y_\alpha} \,\, = \,\, \coprod_{\alpha} 
\Delta_{Y_{\alpha}}
$$
and the homotopy limit indexed over coproduct of categories 
is the product of the corresponding homotopy limits. 
Consequently, $f^\natural$ is strongly excisive.

\begin{proof}[Proof of Theorem \ref{rep}]
Assuming that $f$ is cohomological,
We first show that the coassembly map
$$
c: f(Y) \to f^\natural(Y)
$$
is a weak equivalence. It
 clearly is a weak equivalence when
$Y$ is the initial object. It is also a weak equivalence
when $Y$ is a point, since in this case the map $\Delta^p \to *$
is a weak equivalence and $f$ is a homotopy functor. Since $f$ is excisive,
$c$ is a weak equivalence when $Y$ is a finite set over $X$.

A Mayer-Vietoris
argument then shows that the coassembly map is a weak equivalence
whenever $Y$ is a finite complex over $X$.
Because $f$ is strongly excisive, this is enough to show that $c$ is a weak
equivalence in general, since $f$ is determined up to weak equivalence by
its restriction to the category of finite complexes over $X$.

To complete the proof of Theorem \ref{rep}, we will show that $f^\natural$
is representable. For $Y \in {\cal T}_X$, consider the functor
$$
f^Y_{j} \: \Delta_Y \to {\cal T}
$$
given as follows: for an object $\sigma\:\Delta^p \to Y$, set
$f^Y_j(\sigma) = 
f(\Delta^p)_j$, the $j$-th space of the spectrum $f(\Delta^p)$,
which we will consider here as an unbased space.
Define
$$
{\cal E(Y)}_j \,\, := \,\, \text{hocolim } f^Y_j\, .
$$
If we let $j$ vary, the ${\cal E}(Y)_j$ define a fibered spectrum
${\cal E(Y)}$.

By considering the constant map $f^Y_j(Y) \to *$ 
and taking homotopy colimits, we have a map
$$
{\cal E(Y)}_j \to B\Delta_Y
$$
where $B\Delta Y$ is
the classifying space of the category $\Delta_Y$, i.e,
 the geometric realization of its nerve
(recall that the homotopy colimit of 
the constant functor to a point is $B\Delta_Y$).
This map has the following properties.
\begin{itemize}
\item  It is a quasifibration, i.e., the map
from each fiber to its corresponding 
homotopy fiber is a weak equivalence.
\item The space of sections of the associated
fibration is weak equivalent to the homotopy limit of $f^Y_j$.
This is an observation of Dwyer
\cite[prop.\ 3.12]{Dwyer_centralizer}.
\item By definition, the collection 
$$
\{\text{holim } f^Y_j\}_{j \ge 0}
$$
is the spectrum  $f^\natural(Y)$.
\item 
$$
\xymatrix{
{\cal E(Y)}_j \ar[r]\ar[d]& {\cal E(X)}_j \ar[d] \\
 B\Delta_Y \ar[r] &  B\Delta_X
}
$$
is homotopy cartesian.
\end{itemize}

Set ${\cal E} := {\cal E}(X)$. Then ${\cal E}$ is a
fibered spectrum over $X$, and
it is a straightforward consequence of the above properties
that there is a natural weak equivalence
$f^\natural(Y) \simeq H^\bullet(Y;{\cal E})$. This completes the
proof of Theorem \ref{rep}.
\end{proof}

\end{document}